\DeclareMathOperator{\Ext}{Ext}
\DeclareMathOperator{\pd}{pd}
\DeclareMathOperator{\add}{add}
\DeclareMathOperator{\End}{End}
\newtheorem{theorem}{Theorem}[section]
\newtheorem{lemma}[theorem]{Lemma}
\theoremstyle{definition}
\newtheorem{mydef}[theorem]{Definition}
\begin{document}

\thispagestyle{empty}

\title{Short Proof Of A Conjecture Concerning Split-By-Nilpotent Extensions}
\author{Stephen Zito\thanks{The author was supported by the University of Connecticut-Waterbury}}
        
\maketitle

\begin{abstract}
Let $C$ be a finite dimensional algebra with $B$ a split extension by a nilpotent bimodule $E$.  We provide a short proof to a conjecture by Assem and Zacharia concerning properties of $\mathop{\text{mod}}B$ inherited by $\mathop{\text{mod}}C$.  We show if $B$ is a tilted algebra, then $C$ is a tilted algebra.
\end{abstract}

\section{Introduction}
Let $C$ and $B$ be finite dimensional algebras over an algebraically closed field $k$ such that there exists a split surjective morphism $B\rightarrow C$ whose kernel $E$ is contained in the radical of $B$: we then say $B$ is a split-by-nilpotent extension of $C$ by $E$.  One can study which properties of $\mathop{\text{mod}}B$ are inherited by $\mathop{\text{mod}}C$.  This question was investigated by Assem and Zacharia in $\cite{AZ}$.  They considered several classes of algebras that have been extensively studied in representation theory including quasi-tilted algebras, shod algebras, weakly shod algebras, left and right glued algebras, and laura algebras.  They proved if $B$ is one of the aforementioned algebras then so is $C$.  They also conjectured if $B$ is tilted, then so is $C$.  In this note, we provide an affirmative answer.
\begin{theorem}
Let $B$ be a split extension of $C$ by a nilpotent bimodule $E$.  If $B$ is a tilted algebra, then $C$ is a tilted algebra.
\end{theorem}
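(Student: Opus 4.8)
The plan is to use the characterization of tilted algebras via complete slices: $A$ is tilted if and only if $\mathrm{mod}\,A$ contains a complete slice, in which case the direct sum $T$ of the modules in such a slice is a tilting $A$-module with $\mathrm{End}_A(T)$ hereditary. So it suffices to produce a tilting $C$-module whose endomorphism algebra is hereditary. The facts about the extension I would use are elementary: writing $\pi\colon B\to C$ for the projection and $C\hookrightarrow B$ for the section, restriction along $\pi$ identifies $\mathrm{mod}\,C$ with the full exact subcategory of $\mathrm{mod}\,B$ consisting of the modules annihilated by $E$, so $\mathrm{Hom}_C(M,N)=\mathrm{Hom}_B(M,N)$ for $C$-modules $M,N$; since a $B$-linear splitting of a short exact sequence of $C$-modules is automatically $C$-linear, this gives injections $\mathrm{Ext}^1_C(M,N)\hookrightarrow\mathrm{Ext}^1_B(M,N)$. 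The right-exact functor $-\otimes_B C\colon\mathrm{mod}\,B\to\mathrm{mod}\,C$, $X\mapsto X/XE$, is left adjoint to this embedding and sends $e_iB$ to $e_iC$, hence carries projectives to projectives. Finally $E\subseteq\mathrm{rad}\,B$ forces $B$ and $C$ to have the same simple modules, so any rigid $C$-module of projective dimension at most one with as many non-isomorphic indecomposable summands as there are simple $C$-modules up to isomorphism is tilting.

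Using that $B$ is tilted, I would fix a complete slice $\Sigma$ of $\mathrm{mod}\,B$, let $T$ be its slice module — a tilting $B$-module with $H:=\mathrm{End}_B(T)$ hereditary — and set $W:=T\otimes_B C$. The claim is that $W$ is a tilting $C$-module with $\mathrm{End}_C(W)$ hereditary, which finishes the proof. Granting the vanishing $\mathrm{Tor}_1^B(T,C)=0$, applying $-\otimes_B C$ to a projective resolution $0\to P_1\to P_0\to T\to 0$ stays exact, so $\mathrm{pd}_C W\le\mathrm{pd}_B T\le 1$; rigidity of $W$ descends from that of $T$ through $\mathrm{Ext}^1_C\hookrightarrow\mathrm{Ext}^1_B$; and $W$ has the correct number of summands since $B$ and $C$ share their simples and $-\otimes_B C$ acts as a quotient functor on the relevant subcategory. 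What then remains is to analyse $\mathrm{End}_C(W)\cong\mathrm{Hom}_B(T,T/TE)$, obtained by adjunction, and to show it is hereditary.

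This last point, together with the $\mathrm{Tor}$-vanishing it rests on, is where I expect the genuine difficulty to be: one must show that tensoring a slice module of a tilted algebra with the nilpotent quotient $C=B/E$ is well enough behaved that projective dimension is preserved and the endomorphism algebra stays hereditary. It is here that the hypotheses $E\subseteq\mathrm{rad}\,B$ and $E$ nilpotent have to be used in an essential way; nothing cruder can work, since already the Kronecker algebra, presented as a split-by-nilpotent extension of $kA_2$, has \emph{no} complete slice consisting of $C$-modules, so one cannot merely restrict a slice of $\mathrm{mod}\,B$. If the direct analysis of $\mathrm{End}_C(W)$ proves awkward, the fallback is to invoke the Assem–Zacharia theorem that $C$ is quasitilted and then exclude the quasitilted-of-canonical-type case via the Happel–Reiten–Smalø and Coelho–Lanzilotta dichotomy — but the transfer of a slice module is, I believe, what yields the short proof promised in the title.
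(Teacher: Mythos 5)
Your overall strategy (produce a tilting $C$-module with hereditary endomorphism algebra by pushing the slice module of $B$ down through $-\otimes_BC$) is viable in principle, but as written it is a plan rather than a proof: each of its load-bearing steps is left open, and one is justified incorrectly. Concretely, for $W=T\otimes_BC\cong T/TE$: (i) the vanishing $\mathrm{Tor}_1^B(T,C)=0$ is simply ``granted,'' and it is not automatic for a slice module over a split-by-nilpotent extension; (ii) rigidity of $W$ does \emph{not} descend the way you claim --- to use the injection $\Ext_C^1(W,W)\hookrightarrow\Ext_B^1(W,W)$ you would need $\Ext_B^1(W,W)=0$, which is not the same as $\Ext_B^1(T,T)=0$; applying $\Hom_B(-,T/TE)$ to $0\to TE\to T\to T/TE\to 0$ identifies $\Ext_B^1(T/TE,T/TE)$ with the cokernel of the restriction map $\Hom_B(T,T/TE)\to\Hom_B(TE,T/TE)$, which has no reason to vanish; (iii) $-\otimes_BC$ does not preserve indecomposability (the bijection of Lemma \ref{indsummand} goes the other way, for induction $-\otimes_CB$; already for the Kronecker algebra the indecomposable of dimension vector $(2,1)$ is sent to $S_1\oplus S_1$), so the count of summands of $W$ is not immediate; and (iv) the heredity of $\End_CW\cong\Hom_B(T,T/TE)$, which you correctly identify as the crux, is not addressed at all. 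The fallback through quasitilted algebras and the canonical-type dichotomy is likewise not carried out. So there is a genuine gap sitting exactly where the content of the theorem lies; completing your route would amount to reproving the Assem--Zacharia-type results on descent of tilting modules, which is substantially harder than what is needed.

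For comparison, the paper never transports a tilting module at all. It uses the Jaworska--Malicki--Skowro\'nski criterion (Theorem \ref{Sincere}): $C$ is tilted if and only if it admits a sincere module $M$ such that every indecomposable $X$ satisfies $\Hom_C(X,M)=0$ or $\Hom_C(M,\tau_CX)=0$. The module taken is the plain restriction $(T_{\Sigma})_C$ of the slice module (no quotient by $TE$, so none of the issues above arise); sincerity follows from the adjunction $\Hom_B(P\otimes_CB,T_{\Sigma})\cong\Hom_C(P,(T_{\Sigma})_C)$, and a hypothetical indecomposable $Y$ violating the criterion is lifted to the induced module $Y\otimes_CB$ --- which \emph{is} indecomposable --- where the formula $\tau_B(Y\otimes_CB)\cong\Hom_C(B,\tau_CY)$ of Lemma \ref{AM1} converts both nonvanishing conditions into a contradiction with the criterion for $B$. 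Only $\Hom$-vanishing statements are ever needed, never $\Ext$-rigidity, projective dimension, or an endomorphism-algebra computation; that is what makes the proof short. If you want to keep a slice-based argument, replacing your quotient functor by restriction and your tilting-theoretic target by the criterion of Theorem \ref{Sincere} is the repair.
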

Throughout this paper, we use freely and without further reference properties of the module categories and Auslander-Reiten sequences as can be found in $\cite{ASS}$.  All algebras are assumed to be finite dimensional over an algebraically closed field $k$.  For an algebra $C$, we denote by $\tau_C$ the Auslander-Reiten translation in $\mathop{\text{mod}}C$.  All modules are considered finitely generated right modules.  Given $C$, we denote the Auslander-Reiten quiver by $\Gamma_C$.

 \subsection{Split extensions and extensions of scalars}
We begin this section with the formal definition of a split extension.
\begin{mydef}
 Let $B$ and $C$ be two algebras.  We say $B$ is a $\emph{split extension}$ of $C$ by a nilpotent bimodule $E$ if there exists a short exact sequence of $B$-modules
 \[
 0\rightarrow E\rightarrow B\mathop{\rightleftarrows}^{\mathrm{\pi}}_{\mathrm{\sigma}} C\rightarrow 0
\]
where $\pi$ and $\sigma$ are algebra morphisms, such that $\pi\circ\sigma=1_C$, and $E=\ker\pi$ is nilpotent.  
\end{mydef}
 A useful way to study the module categories of $C$ and $B$ is a general construction via the tensor product, also know as $\emph{extension of scalars}$, that sends a $C$-module to a particular $B$-module.
 \begin{mydef}
 Let $C$ be a subalgebra of $B$ such that $1_C=1_B$, then
 \[
 -\otimes_CB:\mathop{\text{mod}}C\rightarrow\mathop{\text{mod}}B
 \]
 is called the $\emph{induction functor}$, and dually
 \[
 D(B\otimes_CD-):\mathop{\text{mod}}C\rightarrow\mathop{\text{mod}}B
 \]
 is called the $\emph{coinduction functor}$.  Moreover, given $M\in\mathop{\text{mod}}C$, the corresponding induced module is defined to be $M\otimes_CB$, and the coinduced module is defined to be $D(B\otimes_CDM)$.  
 \end{mydef}
  The following result shows induction of an indecomposable $C$-module is an indecomposable $B$-module. 
 \begin{lemma}$\emph{\cite[Lemma~1.2]{AM}}$
 \label{indsummand}
 Let $M$ be a $C$-module.  There exists a bijective correspondence between the isomorphism classes of indecomposable summands of $M$ in $\mathop{\text{mod}}A$, and the isomorphism classes of indecomposable summands of $M\otimes_CB$ in $\mathop{\text{mod}}B$, given by $M\rightarrow M\otimes_CB$.
 \end{lemma}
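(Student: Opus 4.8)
The plan is to study the induction functor $F=-\otimes_C B$ through its effect on endomorphism algebras and to reduce the entire statement to two facts: that $F$ preserves indecomposability, and that $F$ is injective on isomorphism classes of indecomposables. Since $F$ is additive, once these are in hand the bijection follows from the Krull--Schmidt theorem: writing $M=\bigoplus_i M_i^{a_i}$ with the $M_i$ pairwise non-isomorphic indecomposable $C$-modules, we get $FM=\bigoplus_i(FM_i)^{a_i}$ where the $FM_i$ are again pairwise non-isomorphic indecomposables, so the indecomposable summands of $FM$ are exactly the $FM_i=M_i\otimes_C B$, in bijection with the $M_i$ via $F$. (Here I read the ``$\mathop{\text{mod}}A$'' in the statement as $\mathop{\text{mod}}C$.)

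First I would record the bimodule bookkeeping forced by the splitting. Using $\sigma$ to view $C$ as a subalgebra of $B$ with $1_C=1_B$, the kernel $E=\ker\pi$ is a two-sided ideal, and one obtains a decomposition $B\cong\sigma(C)\oplus E$ of $C$-$C$-bimodules with $\sigma(C)\cong C$. Moreover $BE=EB=E$, which guarantees that $M\otimes_C E=(M\otimes_C B)E$ is a $B$-submodule of $M\otimes_C B$ whose quotient is $M\otimes_C(B/E)\cong M$. Restricting scalars along $\sigma$ then gives a $C$-module decomposition $(M\otimes_C B)|_C\cong M\oplus(M\otimes_C E)$. The core step uses the adjunction between induction and restriction along $\sigma$:
\[
\End_B(M\otimes_C B)\cong\Hom_C\bigl(M,(M\otimes_C B)|_C\bigr)\cong\End_C(M)\oplus\Hom_C(M,M\otimes_C E).
\]
Writing $T=\End_B(M\otimes_C B)$, $S=\End_C(M)$ and $R=\Hom_C(M,M\otimes_C E)$, under this identification $R$ is precisely the set of endomorphisms of $M\otimes_C B$ whose image lies in $M\otimes_C E$. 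I expect the main obstacle to be proving that $R$ is a nilpotent two-sided ideal of $T$ with $T/R\cong S$ as rings. That $R$ is a two-sided ideal follows from $M\otimes_C E=(M\otimes_C B)E$ being $B$-stable; that $T/R\cong S$ follows from passing to the quotient by $M\otimes_C E$, which identifies the surjection $T\twoheadrightarrow\End_B\bigl(M\otimes_C(B/E)\bigr)=\End_C(M)=S$ with kernel $R$. Nilpotency is exactly where the hypothesis on $E$ enters: an element of $R$ carries $M\otimes_C E^{k}$ into $M\otimes_C E^{k+1}$, so a $k$-fold product of elements of $R$ has image in $M\otimes_C E^{k}$, which vanishes as soon as $E^{k}=0$.

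Granting this, $R\subseteq\operatorname{rad}(T)$, whence $T/\operatorname{rad}(T)\cong S/\operatorname{rad}(S)$, so $T$ is local if and only if $S$ is local. Over an algebraically closed field a finite-dimensional module is indecomposable exactly when its endomorphism algebra is local, so $M\otimes_C B$ is indecomposable if and only if $M$ is; this is the first fact. For the second, I would apply the reduction functor $N\mapsto N/NE$ from $\mathop{\text{mod}}B$ to $\mathop{\text{mod}}C$ (equivalently $-\otimes_B C$ with $C=B/E$ regarded as a $B$-$C$-bimodule via $\pi$): since it sends $M\otimes_C B$ naturally to $M$, any $B$-isomorphism $M\otimes_C B\cong M'\otimes_C B$ descends to a $C$-isomorphism $M\cong M'$, giving injectivity on isomorphism classes of indecomposables. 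Assembling the two facts through Krull--Schmidt as above yields the claimed bijection, the only genuinely delicate point being the identification and nilpotency of $R$, where tracking the left- and right-module structures attached to $\sigma$ and $\pi$ will demand the most care.
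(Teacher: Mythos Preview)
The paper does not supply its own proof of this lemma; it is quoted without proof from \cite[Lemma~1.2]{AM}, so there is nothing in the present paper to compare against. Your argument is correct and is the standard one: the algebra map $\End_B(M\otimes_C B)\twoheadrightarrow\End_C(M)$ obtained by reducing modulo $(M\otimes_C B)E$ is a split surjection (split by $\psi\mapsto\psi\otimes 1_B$) with nilpotent kernel $R$, so the two endomorphism algebras have isomorphic semisimple quotients and one is local if and only if the other is; injectivity on isomorphism classes follows from $(-)\otimes_C B\otimes_B C\cong\mathrm{id}_{\mathop{\text{mod}}C}$, and Krull--Schmidt assembles these two facts into the claimed bijection.

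One small notational point worth tightening: for $k\geq 2$ the map $M\otimes_C E^{k}\to M\otimes_C B$ induced by the inclusion $E^{k}\hookrightarrow B$ need not be injective, so what you write as ``$M\otimes_C E^{k}$'' should really be its image $(M\otimes_C B)E^{k}$. This does not affect the argument, which only uses the descending chain $(M\otimes_C B)\supseteq (M\otimes_C B)E\supseteq (M\otimes_C B)E^{2}\supseteq\cdots$ together with the fact that each $\phi\in R$ carries $(M\otimes_C B)E^{j}$ into $(M\otimes_C B)E^{j+1}$.
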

 In particular, applying lemma $\ref{indsummand}$ to $C_C$ and $C_C\otimes_CB_B\cong B_B$, there exists a bijective correspondence between the isomorphism classes of indecomposable projective $C$-modules and the isomorphism classes of indecomposable projective $B$-modules, given by $P\rightarrow P\otimes_CB$.
 
 Next, we state a description of the Auslander-Reiten translation of an induced module.
 \begin{lemma}$\emph{\cite[Lemma~2.1]{AM}}$
 \label{AM1}
 For a $C$-module M, we have
 \[
 \tau_B(M\otimes_CB)\cong\emph{Hom}_C(_BB_C,\tau_CM)
 \]
 \end{lemma}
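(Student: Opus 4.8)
The plan is to compute both sides through the classical formula $\tau=D\operatorname{Tr}$ and to reduce the claimed isomorphism to an identity between transposes. First I would observe that the functor $\Hom_C({}_BB_C,-)$ on the right-hand side is precisely the coinduction functor of Definition~1.2: for a right $C$-module $N$ the tensor--hom adjunction $\Hom_k(B\otimes_C DN,k)\cong\Hom_C(B,DDN)$ together with $DDN\cong N$ gives a natural isomorphism $\Hom_C({}_BB_C,N)\cong D(B\otimes_C DN)$. Applying this with $N=\tau_C M=D\operatorname{Tr}_C M$ yields
\[
\Hom_C({}_BB_C,\tau_C M)\cong D\bigl(B\otimes_C\operatorname{Tr}_C M\bigr),
\]
where $B\otimes_C\operatorname{Tr}_C M$ is formed from ${}_BB_C$ and the left $C$-module $\operatorname{Tr}_C M$ and is a left $B$-module. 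Since $\tau_B(M\otimes_C B)=D\operatorname{Tr}_B(M\otimes_C B)$ and $D$ is a duality, it suffices to establish the left $B$-module isomorphism
\[
\operatorname{Tr}_B(M\otimes_C B)\cong B\otimes_C\operatorname{Tr}_C M.
\]

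To prove this I would begin from a minimal projective presentation $P_1\xrightarrow{p}P_0\to M\to0$ in $\operatorname{mod}C$ and apply the induction functor $-\otimes_C B$, which is right exact and, by the remark following Lemma~\ref{indsummand}, sends projectives to projectives. This produces a projective presentation $P_1\otimes_C B\xrightarrow{p\otimes1}P_0\otimes_C B\to M\otimes_C B\to0$ in $\operatorname{mod}B$. Granting for the moment that this induced presentation is again minimal, I would compute the transpose by applying $\Hom_B(-,B)$, using the chain of natural isomorphisms of left $B$-modules, valid for any finitely generated projective right $C$-module $P$:
\[
\Hom_B(P\otimes_C B,{}_BB_B)\cong\Hom_C(P,{}_BB_C)\cong B\otimes_C\Hom_C(P,C).
\]
Here the first isomorphism is the adjunction between induction and restriction, and the second is the evaluation isomorphism $B\otimes_C P^{*}\xrightarrow{\sim}\Hom_C(P,B)$ for projective $P$, with $P^{*}=\Hom_C(P,C)$. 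By naturality the complex $\Hom_B(P_0\otimes_C B,B)\to\Hom_B(P_1\otimes_C B,B)$ is carried to $B\otimes_C P_0^{*}\xrightarrow{1\otimes p^{*}}B\otimes_C P_1^{*}$, and since $B\otimes_C-$ is right exact its cokernel is $B\otimes_C\coker(p^{*})=B\otimes_C\operatorname{Tr}_C M$. This gives the required isomorphism, and hence the lemma.

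The main obstacle is the suppressed minimality of the induced presentation, and this is exactly where the hypothesis $E=\ker\pi\subseteq\operatorname{rad}B$ must enter. The crucial input is $B/\operatorname{rad}B\cong C/\operatorname{rad}C$, which holds because $E\subseteq\operatorname{rad}B$ and $B/E\cong C$; it implies that induction preserves tops, since $(X\otimes_C B)\otimes_B(B/\operatorname{rad}B)\cong X\otimes_C(C/\operatorname{rad}C)$ for every right $C$-module $X$. From this, $P_0\otimes_C B\to M\otimes_C B$ is a projective cover and the image of $p\otimes1$ lies in the radical. The delicate point is that $-\otimes_C B$ is not left exact: the kernel of $p\otimes1$ differs from that of $p$ by the image of the connecting map out of $\operatorname{Tor}_1^{C}(M,B)\cong\operatorname{Tor}_1^{C}(M,E)$, the isomorphism coming from the decomposition $B\cong C\oplus E$ of left $C$-modules. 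I would show that this image lands in $\operatorname{rad}_B(\Omega_C M\otimes_C B)$, again invoking $E\subseteq\operatorname{rad}B$, so that $\ker(p\otimes1)\subseteq\operatorname{rad}(P_1\otimes_C B)$ and minimality follows. Verifying this single radical containment is the one genuinely technical step; everything else is formal manipulation of adjunction and duality.
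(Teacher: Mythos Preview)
The paper does not prove this lemma at all: it is quoted verbatim as \cite[Lemma~2.1]{AM} and used as a black box, so there is no argument in the present paper to compare your proposal against.

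That said, your outline is the standard route and is essentially how Assem and Marmaridis prove it. One comment: the minimality step you flag as ``the one genuinely technical step'' is in fact less delicate than you suggest, and the detour through $\operatorname{Tor}_1^C(M,E)$ and connecting maps is unnecessary. Using the $C$-$C$-bimodule splitting $B\cong C\oplus E$, the induced map $p\otimes 1_B$ decomposes as $p\oplus(p\otimes 1_E)$ on $P_1\oplus(P_1\otimes_C E)\to P_0\oplus(P_0\otimes_C E)$, so $\ker(p\otimes 1_B)=\ker p\oplus\ker(p\otimes 1_E)$. The first summand lies in $\operatorname{rad}_C P_1$ by minimality of the original presentation, and the second lies in $P_1\otimes_C E$, which is contained in $\operatorname{rad}_B(P_1\otimes_C B)$ because $E\subseteq\operatorname{rad}B$. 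This gives $\ker(p\otimes 1_B)\subseteq\operatorname{rad}_B(P_1\otimes_C B)$ directly, and the same decomposition shows $\operatorname{im}(p\otimes 1_B)\subseteq\operatorname{rad}_B(P_0\otimes_C B)$. Your phrasing that ``the kernel of $p\otimes 1$ differs from that of $p$ by the image of the connecting map out of $\operatorname{Tor}_1^C(M,B)$'' is not literally correct, since those objects live in different modules; the bimodule splitting makes the bookkeeping cleaner.
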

This lemma is important because it allows us to use an adjunction isomorphism.  
 \begin{lemma}
\label{Adjunct}
 Let $M$ be a $C$-module, $M\otimes_CB$ the induced module, and let $X$ be any $B$-module.  Then we have 
\[
\emph{Hom}_B(X,\tau_B(M\otimes_CB))\cong\emph{Hom}_B(X,\emph{Hom}_C(_BB_C,\tau_CM)\cong\emph{Hom}_C(X\otimes_BB_C,\tau_CM)
\]
and
\[
\emph{Hom}_B(M\otimes_CB,X)\cong\emph{Hom}_C(M,\emph{Hom}_B(_CB_B,X)).
\]
\end{lemma}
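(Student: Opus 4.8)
The plan is to recognize the first isomorphism in the first display as an immediate consequence of Lemma~\ref{AM1}, and the two remaining isomorphisms as instances of adjoint associativity (the tensor--hom adjunction). First I would apply the functor $\Hom_B(X,-)$ to the natural isomorphism $\tau_B(M\otimes_C B)\cong\Hom_C({}_BB_C,\tau_C M)$ supplied by Lemma~\ref{AM1}; this immediately yields
\[
\Hom_B(X,\tau_B(M\otimes_C B))\cong\Hom_B(X,\Hom_C({}_BB_C,\tau_C M)),
\]
disposing of the first $\cong$ with no further work.

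For the remaining isomorphisms I would invoke the classical form of adjoint associativity: given a $B$-$C$-bimodule ${}_BU_C$, a right $B$-module $Y$, and a right $C$-module $N$, there is an isomorphism
\[
\Hom_C(Y\otimes_B U,N)\cong\Hom_B(Y,\Hom_C(U,N)),
\]
natural in $Y$ and $N$, in which $\Hom_C(U,N)$ is regarded as a right $B$-module via $(f\cdot b)(u)=f(bu)$. The forward map sends $g\colon Y\otimes_B U\to N$ to $y\mapsto(u\mapsto g(y\otimes u))$, with inverse $h\mapsto(y\otimes u\mapsto h(y)(u))$; I would check these are mutually inverse homomorphisms, the verification being routine.

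Specializing to $U={}_BB_C$, $Y=X$, and $N=\tau_C M$, together with the identification $X\otimes_B{}_BB_C\cong X\otimes_B B_C$, gives the second $\cong$ of the first display. For the second display I would apply the same adjunction with the roles of $B$ and $C$ interchanged: taking the $C$-$B$-bimodule ${}_CB_B$, the right $C$-module $M$, and the right $B$-module $X$, adjoint associativity reads
\[
\Hom_B(M\otimes_C B,X)\cong\Hom_C(M,\Hom_B({}_CB_B,X)),
\]
where $\Hom_B({}_CB_B,X)$ carries the right $C$-module structure $(f\cdot c)(b)=f(cb)$ and $M\otimes_C{}_CB_B=M\otimes_C B$ is precisely the induced module.

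Since the statement is entirely formal, I do not expect a genuine obstacle; the only point demanding care is the bookkeeping of the bimodule structures, so that every Hom- and tensor-group is well defined and carries exactly the module structure the next functor in the chain requires. In particular I would verify explicitly that $\Hom_C({}_BB_C,\tau_C M)$ is a right $B$-module, making $\Hom_B(X,-)$ meaningful, and that the candidate adjunction maps respect these actions; once the left/right conventions are fixed, the displayed isomorphisms follow at once from the classical adjunction.
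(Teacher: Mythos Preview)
Your proposal is correct and follows exactly the approach of the paper: the paper's proof simply states that the isomorphisms follow from Lemma~\ref{AM1} and the adjunction isomorphism, which is precisely what you spell out in detail. Your additional care about bimodule bookkeeping is appropriate but not required beyond what the paper indicates.
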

\begin{proof}
These isomorphisms follow from Lemma $\ref{AM1}$ and the adjunction isomorphism.
\end{proof}
We note that $\_\otimes_BB_C$ and $\text{Hom}_B(_CB_B,\_)$ are two expressions for the forgetful functor $\mathop{\text{mod}}B\rightarrow\mathop{\text{mod}}C$.  If $X$ is a $B$-module, we will denote the $C$-module structure by $(X)_C$.

 \subsection{Tilted Algebras}
  We begin with the definition of a partial tilting module.
   \begin{mydef} Let $A$ be an algebra.  An $A$-module $T$ is a $\emph{partial tilting module}$ if the following two conditions are satisfied: 
   \begin{enumerate}
   \item[($\text{1}$)] $\pd_AT\leq1$.
   \item[($\text{2}$)] $\Ext_A^1(T,T)=0$.
   \end{enumerate}
   A partial tilting module $T$ is called a $\emph{tilting module}$ if it also satisfies the following additional condition:
   \begin{enumerate}
   \item[($\text{3}$)] There exists a short exact sequence $0\rightarrow A\rightarrow T'\rightarrow T''\rightarrow 0$ in $\mathop{\text{mod}}A$ with $T'$ and $T''$ $\in \add T$.
   \end{enumerate}
   \end{mydef}
 We now state the definition of a tilted algebra.
 \begin{mydef} Let $A$ be a hereditary algebra with $T$ a tilting $A$-module.  Then the algebra $C=\End_AT$ is called a $\emph{tilted algebra}$.
 \end{mydef}
There exists a characterization of tilted algebras which was obtained independently by Liu $\cite{LIU}$ and Skowro$\acute{\text{n}}$ski $\cite{SK}$.  It uses the concept of a section.
\begin{mydef}
Let $C$ be an algebra with $\Gamma_C$ its Auslander-Reiten quiver.  A connected full subquiver $\Sigma$ of $\Gamma_C$ is a $\emph{section}$ if the following conditions are satisfied:
\begin{enumerate}
\item[($\text{1}$)] $\Sigma$ contains no oriented cycles.
\item[($\text{2}$)] $\Sigma$ intersects each $\tau_C$-orbit exactly once.
\item[($\text{3}$)] If $x_0\rightarrow x_1\rightarrow \cdots \rightarrow x_t$ is a path in $\Gamma_C$ with $x_0,x_t\in\Sigma$, then $x_i\in\Sigma$ for all $i$ such that $0\leq i \leq t$.
\end{enumerate}
\end{mydef}
We recall that a $C$-module $M$ is $\it{faithful}$ if its right annihilator
   \[
   \mathop{\text{Ann}}M=\{c\in C~|~Mc=0\}.
   \]
   vanishes.
The well-known criteria of Liu and Skowro$\acute{\text{n}}$ski states that an algebra $C$ is tilted if and only if $\Gamma_C$ contains a component $\mathcal{C}$ with a faithful section $\Sigma$ such that $\text{Hom}_C(X,\tau_CY)=0$ for all modules $X$, $Y$ from $\Sigma$.

A recent characterization of tilted algebras was established in $\cite{JMS}$.  We recall that a $C$-module $M$ is $\text{sincere}$ if $\text{Hom}_C(C,M)\neq 0$ for every indecomposable summand of $C$.
\begin{theorem}$\emph{\cite[Theorem~1]{JMS}}$
\label{Sincere}
An algebra $C$ is tilted if and only if there exists a sincere module $M$ in $\mathop{\emph{mod}}C$ such that, for any indecomposable module $X$ in $\mathop{\emph{mod}}C$,  $\emph{Hom}_C(X,M)=0$ or $\emph{Hom}_C(M,\tau_CX)=0$.
\end{theorem}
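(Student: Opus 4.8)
The plan is first to reformulate the hypothesis as a \emph{short chain} condition. For an indecomposable $X$, the alternative $\Hom_C(X,M)=0$ or $\Hom_C(M,\tau_C X)=0$ fails exactly when there are nonzero morphisms $X\to M$ and $M\to\tau_C X$, that is, when $M$ is the middle of a short chain $X\to M\to\tau_C X$. So the statement reads: $C$ is tilted if and only if $\operatorname{mod}C$ contains a sincere module that is the middle of no short chain. I would prove the two implications separately, in each case passing through the section criterion of Liu and Skowro\'nski recalled above (which I am free to invoke here, being logically independent of the present statement).

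For the passage from tilted to the existence of $M$, write $C=\End_A T$ with $A$ hereditary and $T$ a tilting module, and let $(\mathcal{X},\mathcal{Y})$ be the associated splitting torsion pair in $\operatorname{mod}C$, with $\mathcal{X}$ the torsion class. The connecting component of $\Gamma_C$ carries a complete slice $\Sigma$, which in particular is a faithful section with $\Hom_C(U,\tau_C V)=0$ for all $U,V\in\Sigma$; let $M$ be its slice module. Faithfulness gives sincerity. The slice module lies on the torsion-free side $\mathcal{Y}$, and I would combine this with $\Hom_C(\mathcal{X},\mathcal{Y})=0$ and the splitting of the torsion pair to argue that $\Hom_C(X,M)\neq 0$ forces $X\in\mathcal{Y}$, whereas $\Hom_C(M,\tau_C X)\neq 0$ places $X$ strictly among the successors of $\Sigma$; the two positions being incompatible, $M$ is the middle of no short chain.

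The reverse implication is the substantial one. Assume $M$ is sincere and the middle of no short chain, and reduce to $C$ connected. Each indecomposable summand $M_i$ of $M$ is again the middle of no short chain, since a short chain through $M_i$ produces one through $M$; hence, by a theorem of Reiten, Skowro\'nski and Smal\o{}, every $M_i$ is \emph{directing}. I would next show that the summands $M_i$ all lie in a single component $\mathcal{C}$ of $\Gamma_C$ and that $\mathcal{C}$ is a faithful, directed component, the sincerity of $M$ being exactly what forces $\mathcal{C}$ to meet the support at every vertex. Inside $\mathcal{C}$ I would then build a faithful section $\Sigma$ through the $M_i$: the absence of oriented cycles and the single-$\tau_C$-orbit condition would follow from the directedness of $\mathcal{C}$, convexity from the separating behaviour of $M$, and the orthogonality $\Hom_C(U,\tau_C V)=0$ for $U,V\in\Sigma$ directly from the no-short-chain hypothesis. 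Feeding $\Sigma$ into the Liu--Skowro\'nski criterion then yields that $C$ is tilted.

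The main obstacle is precisely this construction of the section in the reverse implication: promoting the \emph{local} information ``each summand is directing and $M$ separates'' to the \emph{global}, rigid datum of one faithful section carrying the required Hom-orthogonality. The most delicate sub-point is confining all summands of $M$ to a common section, that is, ruling out that they are scattered across incomparable parts of one component or across several components, and it is here that the sincerity hypothesis must be exploited most carefully.
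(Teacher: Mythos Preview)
The paper does not prove this statement: Theorem~\ref{Sincere} is quoted from \cite{JMS} and used as a black box, with the single remark that, in the original proof, the module $T_{\Sigma}$ (the direct sum of the modules on the section $\Sigma$) serves as the sincere module $M$. There is therefore no ``paper's own proof'' to compare your proposal against.

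That said, your outline is broadly aligned with the argument in \cite{JMS}. For the forward implication you take the slice module, exactly as the paper indicates \cite{JMS} does; your torsion-pair justification is a little loose (the cleanest route is to use that $T_\Sigma$ is a tilting $C$-module whose associated torsion pair is split, so every indecomposable is either generated or cogenerated by $T_\Sigma$, and then the short-chain condition drops out from $\Hom_C(T_\Sigma,\tau_C T_\Sigma)=0$), but the idea is right. For the reverse implication you correctly isolate the directing-module step and the construction of a faithful section as the heart of the matter; this is indeed where \cite{JMS} does the real work, and your candid acknowledgement that confining all summands of $M$ to a single section is the delicate point is accurate. If you intend to actually carry this out rather than cite it, be aware that this step requires substantially more than what you have sketched: one must control the shape of the component $\mathcal{C}$ and show it is standard, and the passage from ``each $M_i$ is directing'' to ``the $M_i$ lie on a common section'' is not automatic from directedness alone.
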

In particular, let $T_{\Sigma}$ be the direct sum of modules lying on the section $\Sigma$.  It was shown in the proof of theorem $\ref{Sincere}$ that $T_{\Sigma}$ is a sincere module satisfying the conditions of the theorem.

\section{Proof of Theorem 1.1}
\begin{proof}   
Let $B$ be a split extension of $C$ by the nilpotent bimodule $E$.  Suppose $B$ is a tilted algebra.  By the criteria of Liu and Skowro$\acute{\text{n}}$ski, there exists a faithful section $\Sigma$ in $\Gamma_B$.  Let $T_{\Sigma}$ be the direct sum of modules lying on the section.  Since $T_{\Sigma}$ is faithful, it is sincere.  By theorem $\ref{Sincere}$, $\text{Hom}_B(X,T_{\Sigma})=0$ or $\text{Hom}_B(T_{\Sigma},\tau_BX)=0$ for every indecomposable module $X$ in $\mathop{\text{mod}}B$.  Consider $(T_{\Sigma})_C$ in $\mathop{\text{mod}}C$.  We know every projective module in $\mathop{\text{mod}}B$ is induced from a projective module in $\mathop{\text{mod}}C$ by lemma $\ref{indsummand}$.  Using lemma $\ref{Adjunct}$ and the fact that $T_{\Sigma}$ is sincere in $\mathop{\text{mod}}B$, $\text{Hom}_B(P\otimes_CB,T_{\Sigma})\cong\text{Hom}_C(P,(T_{\Sigma})_C)\neq 0$ for every indecomposable projective $C$-module $P$.  Thus, $(T_{\Sigma})_C$ is sincere in $\mathop{\text{mod}}C$.  
\par
Suppose $C$ is not tilted.  By theorem $\ref{Sincere}$ and the fact that $(T_{\Sigma})_C$ is sincere, we must have $\text{Hom}_C(Y,(T_{\Sigma})_C)\neq 0$ and $\text{Hom}_C((T_{\Sigma})_C,\tau_CY)\neq 0$ for some indecomposable $C$-module $Y$.  Consider the induced module $Y\otimes_CB$.  Lemma $\ref{indsummand}$ implies $Y\otimes_CB$ is indecomposable in $\mathop{\text{mod}}B$.  This further implies $\tau_B(Y\otimes_CB)$ is indecomposable.  Using lemma $\ref{Adjunct}$,
\[  
\text{Hom}_B(Y\otimes_CB,T_{\Sigma})\cong \text{Hom}_C(Y,(T_{\Sigma})_C)\neq 0
\]
and
\[
\text{Hom}_B(T_{\Sigma},\tau_B(Y\otimes_CB))\cong\text{Hom}_C((T_{\Sigma})_C,\tau_CY)\neq 0.
\]
This contradicts $\text{Hom}_B(X,T_{\Sigma})=0$ or $\text{Hom}_B(T_{\Sigma},\tau_BX)=0$ for every indecomposable module $X$ in $\mathop{\text{mod}}B$ and we conclude $C$ is a tilted algebra.

\end{proof}

\noindent Mathematics Faculty, University of Connecticut-Waterbury, Waterbury, CT 06702, USA
\it{E-mail address}: \bf{stephen.zito@uconn.edu}

\end{document}